\documentclass{amsart}
\usepackage{amsfonts}

\setcounter{MaxMatrixCols}{10}

\newtheorem{theorem}{Theorem}
\theoremstyle{plain}

\newtheorem{corollary}{Corollary}

\newtheorem{example}{Example}

\newtheorem{lemma}{Lemma}

\newtheorem{remark}{Remark}

\numberwithin{equation}{section}
\input{tcilatex}

\begin{document}
\title[H\"{o}lder integral inequality]{A New Improvement of H\"{o}lder
inequality via Isotonic Linear Functionals}
\author{\.{I}mdat \.{I}\c{s}can}
\address{Department of Mathematics, Faculty of Arts and Sciences,\\
Giresun University, 28200, Giresun, Turkey.}
\email{imdati@yahoo.com, imdat.iscan@giresun.edu.tr}
\subjclass[2000]{Primary 26D15; Secondary 26A51}
\keywords{H\"{o}lder Inequality, Young Inequality, Integral Inequalities,
Hermite-Hadamard Type Inequality}

\begin{abstract}
In this paper, new improvement of celebrated H\"{o}lder inequality by means
of isotonic linear functionals is established. An important feature of the
new inequality obtained in here is that many existing inequalities related
to the H\"{o}lder inequality can be improved via new improvement of H\"{o}%
lder inequality. We also show this in an application.
\end{abstract}

\maketitle

\section{Introduction}

\bigskip The famous Young's inequality, as a classical result, state that:
if $a,b>0$ and $t\in \lbrack 0,1]$, then%
\begin{equation}
a^{t}b^{1-t}\leq ta+(1-t)b  \label{0-1}
\end{equation}%
with equality if and only if $a=b.$ Let $p,q>1$ such that $1/p+1/q=1$. The
inequality (\ref{0-1}) can be written as%
\begin{equation}
ab\leq \frac{a^{p}}{p}+\frac{b^{q}}{q}  \label{0-2}
\end{equation}%
for any $x,y\geq 0$. In this form, the inequality (\ref{0-2}) was used to
prove the celebrated H\"{o}lder inequality. One of the most important
inequalities of analysis is H\"{o}lder's inequality. It contributes wide
area of pure and applied mathematics and plays a key role in resolving many
problems in social science and cultural science as well as in natural
science.

\begin{theorem}[H\"{o}lder Inequality for Integrals \protect\cite{MPF13}]
Let $p>1$ and $1/p+1/q=1$. If $f\ $and $g$ are real functions defined on $%
\left[ a,b\right] $ and if $\left\vert f\right\vert ^{p},\left\vert
g\right\vert ^{q}$ are integrable functions on $\left[ a,b\right] $ then%
\begin{equation}
\int_{a}^{b}\left\vert f(x)g(x)\right\vert dx\leq \left(
\int_{a}^{b}\left\vert f(x)\right\vert ^{p}dx\right) ^{1/p}\left(
\int_{a}^{b}\left\vert g(x)\right\vert ^{q}dx\right) ^{1/q},  \label{1-1}
\end{equation}%
with equality holding if and only if $A\left\vert f(x)\right\vert
^{p}=B\left\vert g(x)\right\vert ^{q}$ almost everywhere, where $A$ and $B$
are constants.
\end{theorem}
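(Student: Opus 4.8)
The plan is to derive the integral inequality (\ref{1-1}) directly from Young's inequality (\ref{0-2}) by a normalization-and-integration argument. First I would dispose of the degenerate cases. Writing $F=\left( \int_{a}^{b}\left\vert f(x)\right\vert ^{p}dx\right) ^{1/p}$ and $G=\left( \int_{a}^{b}\left\vert g(x)\right\vert ^{q}dx\right) ^{1/q}$, if either $F=0$ or $G=0$ then the corresponding integrand vanishes almost everywhere, so $\left\vert f(x)g(x)\right\vert =0$ a.e.\ and both sides of (\ref{1-1}) equal zero. Hence I may assume $F>0$ and $G>0$.

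The core step is to apply (\ref{0-2}) pointwise to the normalized quantities $a=\left\vert f(x)\right\vert /F$ and $b=\left\vert g(x)\right\vert /G$, which yields
\[
\frac{\left\vert f(x)\right\vert }{F}\cdot \frac{\left\vert g(x)\right\vert }{G}\leq \frac{\left\vert f(x)\right\vert ^{p}}{p\,F^{p}}+\frac{\left\vert g(x)\right\vert ^{q}}{q\,G^{q}}
\]
for almost every $x\in \left[ a,b\right] $. Integrating both sides over $\left[ a,b\right] $ and using $\int_{a}^{b}\left\vert f(x)\right\vert ^{p}dx=F^{p}$ together with $\int_{a}^{b}\left\vert g(x)\right\vert ^{q}dx=G^{q}$ gives
\[
\frac{1}{FG}\int_{a}^{b}\left\vert f(x)g(x)\right\vert dx\leq \frac{1}{p}+\frac{1}{q}=1,
\]
and multiplying through by $FG$ produces exactly (\ref{1-1}).

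For the equality characterization I would track when equality occurs in the pointwise estimate. By the equality case of (\ref{0-2}) --- equivalently the condition $a=b$ in (\ref{0-1}) --- the pointwise inequality becomes an identity precisely when $\left\vert f(x)\right\vert ^{p}/F^{p}=\left\vert g(x)\right\vert ^{q}/G^{q}$. Since the integrated inequality is an equality if and only if the nonnegative gap between the two sides of the pointwise inequality has integral zero, and a nonnegative integrable function has zero integral exactly when it vanishes almost everywhere, equality in (\ref{1-1}) holds iff $G^{q}\left\vert f(x)\right\vert ^{p}=F^{p}\left\vert g(x)\right\vert ^{q}$ almost everywhere; taking $A=G^{q}$ and $B=F^{p}$ recovers the stated condition $A\left\vert f(x)\right\vert ^{p}=B\left\vert g(x)\right\vert ^{q}$.

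The step I expect to require the most care is this equality analysis: one must argue that equality in the \emph{integrated} inequality forces pointwise equality almost everywhere, which rests on the observation that the difference of the two integrands is nonnegative and its integral is forced to vanish. The inequality direction itself is routine once the normalization is set up.
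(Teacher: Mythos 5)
The paper itself offers no proof of this theorem---it is quoted as a classical result from \cite{MPF13}, and the introduction only remarks that Young's inequality (\ref{0-2}) is the tool by which it is classically proved. Your normalization-and-integration argument (reduce to $F=G=1$ via scaling, apply (\ref{0-2}) pointwise, integrate, and characterize equality through the vanishing a.e.\ of the nonnegative gap) is exactly that standard route and is correct, so it matches the approach the paper alludes to.
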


\begin{theorem}[H\"{o}lder Inequality for Sums \protect\cite{MPF13}]
Let $a=\left( a_{1},...,a_{n}\right) $ and $b=\left( b_{1},...,b_{n}\right) $
be two positive n-tuples and $p,q>1$ such that $1/p+1/q=1.$ Then we have%
\begin{equation}
\sum_{k=1}^{n}a_{k}b_{k}\leq \left( \sum_{k=1}^{n}a_{k}^{p}\right)
^{1/p}\left( \sum_{k=1}^{n}b_{k}^{q}\right) ^{1/q}.  \label{1-2}
\end{equation}%
Equality hold in (\ref{1-2}) if and only if $a^{p}$ and $b^{q}$ are
proportional.
\end{theorem}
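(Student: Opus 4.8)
The plan is to obtain \eqref{1-2} directly from Young's inequality in the form \eqref{0-2}, by a normalization argument. First I would dispose of the degenerate cases: if $\sum_{k=1}^{n}a_{k}^{p}=0$ then every $a_{k}=0$, so both sides of \eqref{1-2} vanish and the asserted proportionality is vacuous; the case $\sum_{k=1}^{n}b_{k}^{q}=0$ is identical. I may therefore set $A:=\left(\sum_{k=1}^{n}a_{k}^{p}\right)^{1/p}>0$ and $B:=\left(\sum_{k=1}^{n}b_{k}^{q}\right)^{1/q}>0$ and work with the rescaled tuples $a_{k}/A$ and $b_{k}/B$.

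The central step is to apply \eqref{0-2} to each pair $a_{k}/A,\,b_{k}/B$, giving
$$\frac{a_{k}b_{k}}{AB}\leq \frac{1}{p}\cdot \frac{a_{k}^{p}}{A^{p}}+\frac{1}{q}\cdot \frac{b_{k}^{q}}{B^{q}}\qquad (k=1,\dots ,n).$$
Summing over $k$ and using $\sum_{k}a_{k}^{p}=A^{p}$, $\sum_{k}b_{k}^{q}=B^{q}$ together with $1/p+1/q=1$, the right-hand side collapses to $1/p+1/q=1$, so $\sum_{k}a_{k}b_{k}\leq AB$, which is exactly \eqref{1-2}.

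For the equality statement I would return to the sharp form \eqref{0-1}: since \eqref{0-2} is \eqref{0-1} evaluated at $t=1/p$ with arguments $a_{k}^{p}/A^{p}$ and $b_{k}^{q}/B^{q}$, each summand is an equality precisely when $a_{k}^{p}/A^{p}=b_{k}^{q}/B^{q}$. A sum of inequalities is an equality if and only if every term is, so equality in \eqref{1-2} holds exactly when $a_{k}^{p}=\left(A^{p}/B^{q}\right)b_{k}^{q}$ for all $k$, i.e. when $a^{p}$ and $b^{q}$ are proportional. The hard part will be the bookkeeping around this equality analysis: one must track the equality condition through the normalization, and invoke the \emph{sharp} version \eqref{0-1} rather than the unadorned \eqref{0-2}, in order to obtain both directions of the characterization cleanly.
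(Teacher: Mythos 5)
Your proof is correct, and it is the classical normalization argument; but it is worth noting that the paper never actually proves this theorem at all --- it is quoted from \cite{MPF13} as background. The closest the paper comes to your route is the remark in its introduction that the Young inequality \eqref{0-2} ``was used to prove the celebrated H\"{o}lder inequality,'' which is precisely the engine of your argument: apply \eqref{0-2} to the normalized pairs $a_k/A$, $b_k/B$, sum, and watch the right side collapse to $1/p+1/q=1$. Where the paper does return to the sum form, it proceeds by a genuinely different mechanism: in the remark following Theorem \ref{2T1} it recovers \eqref{1-2} as the special case of the functional H\"{o}lder inequality \eqref{2T1-1} with $E=\left\{ 1,2,...,n\right\}$, $w=1$, $f(k)=a_{k}$ and $A(f)=\sum_{k=1}^{n}a_{k}$. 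Your approach buys self-containedness (nothing beyond \eqref{0-1} is used) and, importantly, the equality characterization, which the functional statement \eqref{2T1-1} does not address; the paper's framework buys generality, since the single functional theorem simultaneously specializes to the sum, integral, and double-integral forms. Two minor points on your write-up: since the tuples are assumed \emph{positive}, the degenerate case $\sum_{k}a_{k}^{p}=0$ cannot occur, so that step is harmless but unnecessary; and in the converse direction of the equality claim you should record the one-line observation that if $a_{k}^{p}=\lambda b_{k}^{q}$ for all $k$, then summing forces $\lambda =A^{p}/B^{q}$, which is exactly the constant for which every Young inequality becomes an equality --- with that noted, both directions of the characterization are complete.
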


In \cite{I19}, \.{I}\c{s}can gave new improvements for integral ans sum
forms of the H\"{o}lder inequality as follow:

\begin{theorem}
Let $p>1$ and $\frac{1}{p}+\frac{1}{q}=1$. If $f$ and $g$ are real functions
defined on interval $\left[ a,b\right] $ and if $\left\vert f\right\vert
^{p} $, $\left\vert g\right\vert ^{q}$ are integrable functions on $\left[
a,b\right] $ then 
\begin{eqnarray}
\int_{a}^{b}\left\vert f(x)g(x)\right\vert dx &\leq &\frac{1}{b-a}\left\{
\left( \int_{a}^{b}(b-x)\left\vert f(x)\right\vert ^{p}dx\right) ^{\frac{1}{p%
}}\left( \int_{a}^{b}(b-x)\left\vert g(x)\right\vert ^{q}dx\right) ^{\frac{1%
}{q}}\right.  \notag \\
&&\left. +\left( \int_{a}^{b}(x-a)\left\vert f(x)\right\vert ^{p}dx\right) ^{%
\frac{1}{p}}\left( \int_{a}^{b}(x-a)\left\vert g(x)\right\vert ^{q}dx\right)
^{\frac{1}{q}}\right\}  \label{1-5}
\end{eqnarray}%
\textit{\ }
\end{theorem}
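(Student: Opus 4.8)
The plan is to exploit the elementary partition-of-unity identity
\[
1=\frac{b-x}{b-a}+\frac{x-a}{b-a},\qquad x\in[a,b],
\]
which holds because $(b-x)+(x-a)=b-a$. Multiplying the integrand $\left\vert f(x)g(x)\right\vert$ by this identity and splitting the integral, I would write
\[
\int_{a}^{b}\left\vert f(x)g(x)\right\vert dx
=\frac{1}{b-a}\int_{a}^{b}(b-x)\left\vert f(x)g(x)\right\vert dx
+\frac{1}{b-a}\int_{a}^{b}(x-a)\left\vert f(x)g(x)\right\vert dx .
\]
The entire proof then reduces to estimating each of these two pieces separately and adding the results.

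For the first piece, the key move is to distribute the nonnegative weight using $1/p+1/q=1$, writing $(b-x)=(b-x)^{1/p}(b-x)^{1/q}$. This regroups the integrand as the product of $(b-x)^{1/p}\left\vert f(x)\right\vert$ and $(b-x)^{1/q}\left\vert g(x)\right\vert$, so the classical H\"{o}lder inequality for integrals (the first theorem in the excerpt) applies directly and yields
\[
\frac{1}{b-a}\int_{a}^{b}(b-x)\left\vert f(x)g(x)\right\vert dx
\leq\frac{1}{b-a}\left(\int_{a}^{b}(b-x)\left\vert f(x)\right\vert ^{p}dx\right)^{1/p}
\left(\int_{a}^{b}(b-x)\left\vert g(x)\right\vert ^{q}dx\right)^{1/q}.
\]
I would treat the second piece identically, splitting $(x-a)=(x-a)^{1/p}(x-a)^{1/q}$ and applying H\"{o}lder again to obtain the companion bound with $(x-a)$ in place of $(b-x)$.

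Adding the two estimates reproduces exactly the right-hand side of \eqref{1-5}, completing the argument. I should note for rigor that the weighted functions $(b-x)\left\vert f\right\vert^{p}$, $(x-a)\left\vert f\right\vert^{p}$, and their $g$-analogues are integrable on $[a,b]$, since the weights $b-x$ and $x-a$ are bounded by $b-a$ on the interval and $\left\vert f\right\vert^{p},\left\vert g\right\vert^{q}$ are assumed integrable, so each application of H\"{o}lder is legitimate. I do not anticipate a genuine obstacle here: the only real idea is the weight decomposition, and the rest is a mechanical double application of the classical inequality. The substantive content of the theorem is the \emph{sharpness} of the improvement, namely that the sum of the two split terms never exceeds the single classical H\"{o}lder bound, which itself follows from the concavity/superadditivity inherent in applying H\"{o}lder to a sum rather than to the whole; that refinement, however, is a remark about the inequality rather than part of proving \eqref{1-5} itself.
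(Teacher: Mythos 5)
Your proof is correct and is essentially the paper's own argument in concrete form: the paper establishes the general functional version (Theorem \ref{3T1}) by decomposing $A(wfg)=A(\alpha wfg)+A(\beta wfg)$ with $\alpha+\beta=1$ and applying the weighted H\"{o}lder inequality for isotonic functionals to each piece, then recovers (\ref{1-5}) by taking $\alpha(x)=\frac{b-x}{b-a}$, $\beta(x)=\frac{x-a}{b-a}$ and $A(f)=\int_{a}^{b}\left\vert f(t)\right\vert dt$ --- exactly your partition of unity. Your only departure is cosmetic: you obtain the weighted H\"{o}lder step from the classical integral inequality via the factorization $(b-x)=(b-x)^{1/p}(b-x)^{1/q}$, rather than citing a weighted (functional) version directly.
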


\begin{theorem}
\label{2.3}Let $a=\left( a_{1},...,a_{n}\right) $ and $b=\left(
b_{1},...,b_{n}\right) $ be two positive n-tuples and $p,q>1$ such that $%
1/p+1/q=1.$ Then%
\begin{eqnarray}
\sum_{k=1}^{n}a_{k}b_{k} &\leq &\frac{1}{n}\left\{ \left(
\sum_{k=1}^{n}ka_{k}^{p}\right) ^{1/p}\left( \sum_{k=1}^{n}kb_{k}^{q}\right)
^{1/q}\right.   \label{2-3} \\
&&\left. +\left( \sum_{k=1}^{n}\left( n-k\right) a_{k}^{p}\right)
^{1/p}\left( \sum_{k=1}^{n}\left( n-k\right) b_{k}^{q}\right) ^{1/q}\right\}
.  \notag
\end{eqnarray}
\end{theorem}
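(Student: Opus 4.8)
The plan is to mimic the decomposition that produces the integral inequality \eqref{1-5}, where the weight $b-a$ is split as $(b-x)+(x-a)$; in the discrete setting the natural analogue is the identity $n=k+(n-k)$. First I would write
\begin{equation*}
\sum_{k=1}^{n}a_{k}b_{k}=\frac{1}{n}\sum_{k=1}^{n}\bigl(k+(n-k)\bigr)a_{k}b_{k}=\frac{1}{n}\left[ \sum_{k=1}^{n}k\,a_{k}b_{k}+\sum_{k=1}^{n}(n-k)\,a_{k}b_{k}\right] ,
\end{equation*}
which splits the left-hand side into two weighted sums carrying the nonnegative weights $k\geq 1$ and $n-k\geq 0$. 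This is the only genuinely creative move in the argument, and it is dictated entirely by the shape of the target inequality \eqref{2-3}.

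The key step is then to apply the classical H\"{o}lder inequality \eqref{1-2} to each weighted sum after distributing the weight evenly across the two factors. Since $1/p+1/q=1$, one has $k=k^{1/p}k^{1/q}$, so that $k\,a_{k}b_{k}=\left( k^{1/p}a_{k}\right) \left( k^{1/q}b_{k}\right)$; applying \eqref{1-2} to the $n$-tuples $\left( k^{1/p}a_{k}\right)_{k}$ and $\left( k^{1/q}b_{k}\right)_{k}$ yields
\begin{equation*}
\sum_{k=1}^{n}k\,a_{k}b_{k}\leq \left( \sum_{k=1}^{n}k\,a_{k}^{p}\right) ^{1/p}\left( \sum_{k=1}^{n}k\,b_{k}^{q}\right) ^{1/q}.
\end{equation*}
The identical manipulation with the weight $n-k$ in place of $k$ produces the companion bound for the second sum. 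Substituting both estimates back into the decomposition and factoring out $1/n$ gives exactly \eqref{2-3}.

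I do not anticipate a serious obstacle, since the argument is routine once the splitting is fixed; the points requiring care are merely that the weights $k$ and $n-k$ are nonnegative, so that the factorization $k=k^{1/p}k^{1/q}$ and the application of \eqref{1-2} are legitimate (using positivity of the $a_{k},b_{k}$), and that the term $k=n$ contributes a vanishing weight $n-k=0$ to the second sum without harming validity. It is worth noting separately that summing the two H\"{o}lder bounds \emph{before} dividing by $n$ is what makes \eqref{2-3} sharper than \eqref{1-2}: the map $(x,y)\mapsto x^{1/p}y^{1/q}$ is positively homogeneous and concave, hence superadditive, and since the weight families satisfy $k+(n-k)=n$ one obtains $\left( \sum k\,a_{k}^{p}\right) ^{1/p}\left( \sum k\,b_{k}^{q}\right) ^{1/q}+\left( \sum (n-k)a_{k}^{p}\right) ^{1/p}\left( \sum (n-k)b_{k}^{q}\right) ^{1/q}\leq n\left( \sum a_{k}^{p}\right) ^{1/p}\left( \sum b_{k}^{q}\right) ^{1/q}$, using $n^{1/p}n^{1/q}=n$; this confirms that \eqref{2-3} is a genuine improvement of the classical estimate.
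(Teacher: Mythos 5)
Your proof is correct and is essentially the paper's own route: the paper obtains (\ref{2-3}) by specializing its functional-level result (Corollary \ref{3C1} with $\alpha(k)=\frac{k}{n}$, $\beta(k)=\frac{n-k}{n}$ and $A(f)=\sum_{k=1}^{n}f(k)$), and unwinding that specialization gives exactly your decomposition $n=k+(n-k)$ followed by weighted H\"{o}lder applied to each of the two sums. Your closing superadditivity remark is likewise the discrete instance of part (ii) of Theorem \ref{3T1}, which the paper proves via Young's inequality rather than concavity, so nothing in your argument is missing or different in substance.
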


\section{H\"{o}lder's inequality for positive functionals}

Let $E$ be a nonempty set and $L$ be a linear class of real valued functions
on $E$ having the following properties

$L1:$ If $f,g\in L$ then $\left( \alpha f+\beta g\right) \in L$ for all $%
\alpha ,\beta \in 
\mathbb{R}
$;

$L2:$ $1\in L$, that is if $f(t)=1,t\in E,$ then $f\in L;$

$L3:$ If $f\in L,E_{1}\in L$ then $f\chi _{E_{1}}\in L,$

where $\chi _{E_{1}}$ is the indicator function of $E_{1}$. It follows from $%
L2$ and $L3$ that $\chi _{E_{1}}\in L$ for every $E_{1}\in L.$

We also consider positive isotonic linear functionals $A:L\rightarrow 
\mathbb{R}
$ is a functional satisfying the following properties:

$A1:$ $A\left( \alpha f+\beta g\right) =\alpha A\left( f\right) +\beta $ $%
A\left( g\right) $ for $f,g\in L$ and $\alpha ,\beta \in 
\mathbb{R}
;$

$A2:$ If $f\in L,$ $f(t)\geq 0$ on $E$ then $A\left( f\right) \geq 0.$

Furthermore, It follows from $L3$ that for every $E_{1}\in L$ such that $%
A(\chi _{E_{1}})>0,$ the functional $A_{E_{1}}$ is defined for all $f\in L$
by $A_{E_{1}}(f)=A\left( f\chi _{E_{1}}\right) /A\left( \chi _{E_{1}}\right) 
$ is a fixed positive isotonic linear functional with $A_{E_{1}}(\mathbf{1}%
)=1.$ We observe that%
\begin{equation*}
A\left( \chi _{E_{1}}\right) +A\left( \chi _{E\backslash E_{1}}\right) =1,
\end{equation*}%
\begin{equation*}
A(f)=A\left( f.\chi _{E_{1}}\right) +A\left( f.\chi _{E\backslash
E_{1}}\right) .
\end{equation*}%
Isotonic, that is, order-preserving, linear functionals are natural objects
in analysis which enjoy a number of convenient properties. Functional
versions of well-known inequalities and related results could be found in 
\cite{APV04,C14,C16,D03,D17,DKA16,MPF13,P91}.

\begin{example}
\textbf{i.) } If $E=\left[ a,b\right] \subseteq 
\mathbb{R}
$ and $L=L\left[ a,b\right] ,$ then 
\begin{equation*}
A(f)=\int_{a}^{b}f(t)dt
\end{equation*}%
is an isotonic linear functional.

\textbf{ii.) }If $E=\left[ a,b\right] \times \left[ c,d\right] \subseteq 
\mathbb{R}
^{2}$ and $L=L\left( \left[ a,b\right] \times \left[ c,d\right] \right) ,$
then 
\begin{equation*}
A(f)=\int_{a}^{b}\int_{c}^{d}f(x,y)dxdy
\end{equation*}%
is an isotonic linear functional.

\textbf{iii.) }If $\left( E,\Sigma ,\mu \right) $ is a measure space with $%
\mu $ positive measure on $E$ and $L=L(\mu )$ then 
\begin{equation*}
A(f)=\int_{E}fd\mu \text{ }
\end{equation*}%
is an isotonic linear functional.

\textbf{iv.) }If $E$ is a subset of the natural numbers $%
\mathbb{N}
$ with all $p_{k}\geq 0,$ then $A(f)=\sum_{k\in E}p_{k}f_{k}$ is an isotonic
linear functional. For example; If $E=\left\{ 1,2,...,n\right\} $ and $%
f:E\rightarrow 
\mathbb{R}
,f(k)=a_{k},$ then $A(f)=\sum_{k=1}^{n}a_{k}$ is an isotonic linear
functional. If $E=\left\{ 1,2,...,n\right\} \times \left\{ 1,2,...,m\right\} 
$ and $f:E\rightarrow 
\mathbb{R}
,f(k,l)=a_{k,l},$ then $A(f)=\sum_{k=1}^{n}\sum_{l=1}^{m}a_{k,l}$ is an
isotonic linear functional.
\end{example}

\begin{theorem}[H\"{o}lder's inequality for isotonic functionals 
\protect\cite{PPT92}]
\label{2T1}Let $L$ satisfy conditions $L1$, $L2$, and $A$ satisfy conditions 
$A1$, $A2$ on a base set $E$. Let $p>1$ and $p^{-1}+q^{-1}=1.$ If $w,f,g\geq
0$ on $E$ and $wf^{p},wg^{q},wfg\in L$ then we have%
\begin{equation}
A\left( wfg\right) \leq A^{1/p}\left( wf^{p}\right) A^{1/q}\left(
wg^{q}\right) .  \label{2T1-1}
\end{equation}%
In the case $0<p<1$ and $A\left( wg^{q}\right) >0$ (or $p<0$ and $A\left(
wf^{p}\right) >0$), the inequality in (\ref{2T1-1}) is reversed.
\end{theorem}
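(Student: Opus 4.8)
The plan is to reduce the functional inequality to the pointwise Young inequality (\ref{0-2}) and then apply the functional $A$. First I would dispose of the degenerate cases. Suppose $A(wf^{p})=0$ (the case $A(wg^{q})=0$ being symmetric). Writing $wfg=w(\lambda f)(g/\lambda )$ for a parameter $\lambda >0$ and invoking (\ref{0-2}) pointwise gives
\[
wfg\leq \frac{\lambda ^{p}}{p}\,wf^{p}+\frac{1}{\lambda ^{q}q}\,wg^{q}\quad \text{on }E.
\]
Applying $A$, and using $A1$ together with the isotonicity consequence that $\phi \leq \psi$ implies $A(\phi )\leq A(\psi )$, yields $A(wfg)\leq \frac{1}{\lambda ^{q}q}A(wg^{q})$. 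Letting $\lambda \rightarrow \infty $ forces $A(wfg)\leq 0$, while $wfg\geq 0$ and $A2$ give $A(wfg)\geq 0$; hence both sides of (\ref{2T1-1}) vanish and the inequality holds trivially.

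For the main case $A(wf^{p})>0$ and $A(wg^{q})>0$, I would normalize. Setting $a=f(t)/A^{1/p}(wf^{p})$ and $b=g(t)/A^{1/q}(wg^{q})$ and substituting into Young's inequality (\ref{0-2}) gives, for each $t\in E$,
\[
\frac{f(t)g(t)}{A^{1/p}(wf^{p})A^{1/q}(wg^{q})}\leq \frac{1}{p}\,\frac{f(t)^{p}}{A(wf^{p})}+\frac{1}{q}\,\frac{g(t)^{q}}{A(wg^{q})}.
\]
Multiplying through by $w(t)\geq 0$ preserves the inequality, producing a pointwise bound on $wfg$ in terms of $wf^{p}$ and $wg^{q}$ with constant coefficients.

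The final step is to apply $A$ to this pointwise inequality. The hypotheses $wf^{p},wg^{q},wfg\in L$ guarantee that every term lies in the domain of $A$, and the coefficients $1/(pA(wf^{p}))$ and $1/(qA(wg^{q}))$ are constants, so linearity $A1$ lets me pull them out while isotonicity (from $A1$ and $A2$) preserves the direction of the inequality. The right-hand side then collapses to $\frac{1}{p}\cdot \frac{A(wf^{p})}{A(wf^{p})}+\frac{1}{q}\cdot \frac{A(wg^{q})}{A(wg^{q})}=\frac{1}{p}+\frac{1}{q}=1$, which gives $A(wfg)\leq A^{1/p}(wf^{p})A^{1/q}(wg^{q})$, as desired.

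The computation itself is routine; the one place demanding care is the degenerate case, since isotonicity alone does not permit one to conclude $wf^{p}\equiv 0$ from $A(wf^{p})=0$, and it is the parametrized Young estimate together with the limit $\lambda \rightarrow \infty $ that makes the argument go through. For the reversed inequality when $0<p<1$ (or $p<0$), I would run the same normalization but invoke the corresponding reverse form of Young's inequality, the positivity hypothesis $A(wg^{q})>0$ (resp. $A(wf^{p})>0$) ensuring that the normalizing constants are well defined.
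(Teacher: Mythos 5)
Your proof is correct, but there is nothing in the paper to compare it against: the paper states this theorem as a known result, attributed to \cite{PPT92}, and gives no proof of it. Your argument is the classical one for that cited result --- pointwise Young's inequality (\ref{0-2}) applied to the normalized quantities $f(t)/A^{1/p}\left( wf^{p}\right) $ and $g(t)/A^{1/q}\left( wg^{q}\right) $, multiplied through by $w(t)\geq 0$, then hit with $A$, using $A1$ together with the isotonicity that $A1$ and $A2$ jointly imply (for this step one should note that the difference of the two sides of the pointwise inequality lies in $L$ by $L1$, so $A2$ applies to it). The genuinely non-routine part is your handling of the degenerate case $A\left( wf^{p}\right) =0$: you correctly observe that one cannot conclude $wf^{p}\equiv 0$, and your parametrized bound $A\left( wfg\right) \leq \frac{\lambda ^{p}}{p}A\left( wf^{p}\right) +\frac{1}{\lambda ^{q}q}A\left( wg^{q}\right) $ with $\lambda \rightarrow \infty $ disposes of it cleanly; it is worth noting that the paper faces an analogous degeneracy in its proof of Theorem \ref{3T1}(ii) and resolves it by a different device (additivity of $A$ over the decomposition $\alpha +\beta =1$), which is unavailable in your setting, so the $\lambda $-trick is the right tool. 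One small repair to your final paragraph: in the reversed case $0<p<1$ the hypothesis guarantees only $A\left( wg^{q}\right) >0$, not $A\left( wf^{p}\right) >0$, so your normalization is not automatically well defined; you should first dispose of the subcase $A\left( wf^{p}\right) =0$, where the reversed inequality holds trivially because $A\left( wfg\right) \geq 0=A^{1/p}\left( wf^{p}\right) A^{1/q}\left( wg^{q}\right) $, and only then normalize.
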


\begin{remark}
i.) If we choose $E=\left[ a,b\right] \subseteq 
\mathbb{R}
$, $L=L\left[ a,b\right] $, $w=1$ on $E$ and $A(f)=\int_{a}^{b}\left\vert
f(t)\right\vert dt$ in the Theorem \ref{2T1}, then the inequality (\ref%
{2T1-1}) reduce the inequality (\ref{1-1}).

ii.) If we choose $E=\left\{ 1,2,...,n\right\} ,$ $w=1$ on $E$, $%
f:E\rightarrow \left[ 0,\infty \right) ,f(k)=a_{k},$ and $%
A(f)=\sum_{k=1}^{n}a_{k}$ in the Theorem \ref{2T1}, then the inequality (\ref%
{2T1-1}) reduce the inequality (\ref{1-2}).

iii.) If we choose $E=\left[ a,b\right] \times \left[ c,d\right] ,L=L(E)$, $%
w=1$ on $E$ and $A(f)=\int_{a}^{b}\int_{c}^{d}\left\vert f(x,y)\right\vert
dxdy$ in the Theorem \ref{2T1}, then the inequality (\ref{2T1-1}) reduce the
following inequality for double integrals:%
\begin{equation*}
\int_{a}^{b}\int_{c}^{d}\left\vert f(x,y)\right\vert \left\vert
g(x,y)\right\vert dxdy\leq \left( \int_{a}^{b}\int_{c}^{d}\left\vert
f(x,y)\right\vert ^{p}dx\right) ^{1/p}\left(
\int_{a}^{b}\int_{c}^{d}\left\vert g(x,y)\right\vert ^{q}dx\right) ^{1/q}.
\end{equation*}
\end{remark}

The aim of this paper is to give a new general improvement of H\"{o}lder
inequality for isotonic linear functional. As applications, this new
inequality will be rewritten for several important particular cases of
isotonic linear functionals. Also, we give an application to show that
improvement is hold for double integrals.

\section{Main results}

\begin{theorem}
\label{3T1}Let $L$ satisfy conditions $L1$, $L2$, and $A$ satisfy conditions 
$A1$, $A2$ on a base set $E$. Let $p>1$ and $p^{-1}+q^{-1}=1.$ If $\alpha
,\beta ,w,f,g\geq 0$ on $E$ and $\alpha wfg,\beta wfg,\alpha wf^{p},\alpha
wg^{q},\beta wf^{p},\beta wg^{q},wfg\in L$ then we have

i.)%
\begin{equation}
A\left( wfg\right) \leq A^{1/p}\left( \alpha wf^{p}\right) A^{1/q}\left(
\alpha wg^{q}\right) +A^{1/p}\left( \beta wf^{q}\right) A^{1/q}\left( \beta
wg^{q}\right)  \label{3T1-1}
\end{equation}

ii.)%
\begin{equation}
A^{1/p}\left( \alpha wf^{p}\right) A^{1/q}\left( \alpha wg^{q}\right)
+A^{1/p}\left( \beta wf^{p}\right) A^{1/q}\left( \beta wg^{q}\right) \leq
A^{1/p}\left( wf^{p}\right) A^{1/q}\left( wg^{q}\right) .  \label{3T1-2}
\end{equation}
\end{theorem}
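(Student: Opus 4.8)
The plan is to derive both parts from the decomposition afforded by the normalization hypothesis $\alpha+\beta=1$ together with the already-established Hölder inequality for isotonic functionals (Theorem \ref{2T1}). I note first that this identity $\alpha+\beta=1$ is the crucial structural assumption: without it part (ii) would fail outright (taking $\alpha=\beta=1$ already breaks it), so I treat it as an active part of the hypotheses throughout.

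For part (i), I would begin by writing $wfg=\alpha wfg+\beta wfg$ and applying the additivity in property $A1$ to get $A(wfg)=A(\alpha wfg)+A(\beta wfg)$. To each summand I then apply Theorem \ref{2T1} with the modified nonnegative weight $\alpha w$ (respectively $\beta w$) in place of $w$; this is legitimate because $\alpha w\geq 0$ and $\beta w\geq 0$ by nonnegativity of $\alpha,\beta,w$, and the membership conditions required by the theorem, namely $\alpha wf^{p},\alpha wg^{q},\alpha wfg\in L$ and their $\beta$-counterparts, are exactly those listed in the hypotheses. This yields $A(\alpha wfg)\leq A^{1/p}(\alpha wf^{p})A^{1/q}(\alpha wg^{q})$ together with the analogous bound for the $\beta$-term, and summing the two produces (\ref{3T1-1}) at once.

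For part (ii), I would set $a_{1}=A(\alpha wf^{p})$, $b_{1}=A(\alpha wg^{q})$, $a_{2}=A(\beta wf^{p})$, $b_{2}=A(\beta wg^{q})$, all nonnegative by $A2$. The key observation is again linearity combined with $\alpha+\beta=1$, which gives $a_{1}+a_{2}=A((\alpha+\beta)wf^{p})=A(wf^{p})$ and likewise $b_{1}+b_{2}=A(wg^{q})$. Hence (\ref{3T1-2}) is precisely the statement
\[
a_{1}^{1/p}b_{1}^{1/q}+a_{2}^{1/p}b_{2}^{1/q}\leq (a_{1}+a_{2})^{1/p}(b_{1}+b_{2})^{1/q}.
\]
I would then recognize the right-hand side as the target of the two-term discrete Hölder inequality (\ref{1-2}): putting $x_{k}=a_{k}^{1/p}$ and $y_{k}=b_{k}^{1/q}$ for $k=1,2$, the left-hand side becomes $\sum_{k=1}^{2}x_{k}y_{k}$ while the right-hand side becomes $(\sum_{k}x_{k}^{p})^{1/p}(\sum_{k}y_{k}^{q})^{1/q}$, so the inequality follows immediately from the $n=2$ case of Hölder's inequality for sums.

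The only genuine obstacle here is conceptual rather than computational: spotting that part (ii) collapses to the elementary two-term sum-form Hölder inequality once the linearity identities $a_{1}+a_{2}=A(wf^{p})$ and $b_{1}+b_{2}=A(wg^{q})$ are in hand. After that reduction both parts are short. A minor bookkeeping point is that the weight fed into Theorem \ref{2T1} is $\alpha w$ (not $w$), so one must confirm the relevant products lie in $L$; but these are exactly the membership hypotheses, so no additional verification is needed.
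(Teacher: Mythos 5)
Your proof is correct, and you rightly flag that $\alpha+\beta=1$ must be read as part of the hypotheses: the paper omits it from the statement of Theorem \ref{3T1}, but its own proof uses it twice (in writing $wfg=\alpha wfg+\beta wfg$ and in summing the ratios to $1$), and the general version, Theorem \ref{3T2}, does state $\sum_{i=1}^{m}\alpha _{i}=1$ explicitly. Part (i) of your argument is exactly the paper's: split $wfg$ as $\alpha wfg+\beta wfg$, use $A1$, and apply Theorem \ref{2T1} with weights $\alpha w$ and $\beta w$ (your derivation also silently corrects the obvious typo $\beta wf^{q}$ in the displayed (\ref{3T1-1})). In part (ii) you genuinely diverge. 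The paper normalizes by $A^{1/p}\left( wf^{p}\right) A^{1/q}\left( wg^{q}\right) $, treating the case where this product vanishes separately (via $A2$ and linearity), and applies Young's inequality (\ref{0-1}) with exponents $1/p,1/q$ to each of the two resulting terms, the bound collapsing to $1$ by linearity and $\alpha+\beta=1$. You instead observe that, with $a_{i},b_{i}$ as you define them, linearity turns (\ref{3T1-2}) into the two-term case of the sum-form H\"{o}lder inequality (\ref{1-2}). The two routes are mathematically cognate---(\ref{1-2}) is itself standardly proved by exactly the paper's normalization-plus-Young argument---but yours is shorter and has the advantage that the degenerate case $A\left( wf^{p}\right) =0$ or $A\left( wg^{q}\right) =0$ needs no separate treatment, since the scalar inequality covers zero entries. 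The one point to patch is that (\ref{1-2}) is stated in the paper for \emph{positive} $n$-tuples (positivity being needed there only for the equality characterization), whereas your $a_{i},b_{i}$ are merely nonnegative by $A2$; so you should either invoke the nonnegative version of discrete H\"{o}lder, or note that the zero cases are handled trivially (or by continuity), which is a one-line addition.
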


\begin{proof}
i.) By using of H\"{o}lder inequality for isotonic functionals in (\ref%
{2T1-1}) and linearity of $A$, it is easily seen that%
\begin{eqnarray*}
A\left( wfg\right) &=&A\left( \alpha wfg+\beta wfg\right) =A\left( \alpha
wfg\right) +A\left( \beta wfg\right) \\
&\leq &A^{1/p}\left( \alpha wf^{p}\right) A^{1/q}\left( \alpha wg^{q}\right)
+A^{1/p}\left( \beta wf^{p}\right) A^{1/q}\left( \beta wg^{q}\right) .
\end{eqnarray*}

ii.) Firstly, we assume that $A^{1/p}\left( wf^{p}\right) A^{1/q}\left(
wg^{q}\right) \neq 0$. then%
\begin{eqnarray*}
&&\frac{A^{1/p}\left( \alpha wf^{p}\right) A^{1/q}\left( \alpha
wg^{q}\right) +A^{1/p}\left( \beta wf^{p}\right) A^{1/q}\left( \beta
wg^{q}\right) }{A^{1/p}\left( wf^{p}\right) A^{1/q}\left( wg^{q}\right) } \\
&=&\left( \frac{A\left( \alpha wf^{p}\right) }{A\left( wf^{p}\right) }%
\right) ^{1/p}\left( \frac{A\left( \alpha wg^{q}\right) }{A\left(
wg^{q}\right) }\right) ^{1/q}+\left( \frac{A\left( \beta wf^{p}\right) }{%
A\left( wf^{p}\right) }\right) ^{1/p}\left( \frac{A\left( \beta
wg^{q}\right) }{A\left( wg^{q}\right) }\right) ^{1/q},
\end{eqnarray*}%
By the inequality (\ref{0-1}) and linearity of $A$, we have%
\begin{eqnarray*}
&&\frac{A^{1/p}\left( \alpha wf^{p}\right) A^{1/q}\left( \alpha
wg^{q}\right) +A^{1/p}\left( \beta wf^{p}\right) A^{1/q}\left( \beta
wg^{q}\right) }{A^{1/p}\left( wf^{p}\right) A^{1/q}\left( wg^{q}\right) } \\
&\leq &\frac{1}{p}\left[ \frac{A\left( \alpha wf^{p}\right) }{A\left(
wf^{p}\right) }+\frac{A\left( \beta wf^{p}\right) }{A\left( wf^{p}\right) }%
\right] +\frac{1}{q}\left[ \frac{A\left( \alpha wg^{q}\right) }{A\left(
wg^{q}\right) }+\frac{A\left( \beta wg^{q}\right) }{A\left( wg^{q}\right) }%
\right] \\
&=&1.
\end{eqnarray*}

Finally, suppose that $A^{1/p}\left( wf^{p}\right) A^{1/q}\left(
wg^{q}\right) =0$. Then $A^{1/p}\left( wf^{p}\right) =0$ or $A^{1/q}\left(
wg^{q}\right) =0$, i.e. $A\left( wf^{p}\right) =0$ or $A\left( wg^{q}\right)
=0.$ We assume that $A\left( wf^{p}\right) =0$. Then by using linearity of $%
A $ we have,%
\begin{equation*}
0=A\left( wf^{p}\right) =A\left( \alpha wf^{p}+\beta wf^{p}\right) =A\left(
\alpha wf^{p}\right) +A\left( \beta wf^{p}\right) .
\end{equation*}%
Since $A\left( \alpha wf\right) ,A\left( \beta wf\right) \geq 0$, we get $%
A\left( \alpha wf^{p}\right) =0$ and $A\left( \beta wf^{p}\right) =0.$ From
here, it follows that 
\begin{equation*}
A^{1/p}\left( \alpha wf^{p}\right) A^{1/q}\left( \alpha wg^{q}\right)
+A^{1/p}\left( \beta wf^{p}\right) A^{1/q}\left( \beta wg^{q}\right) =0\leq
0=A^{1/p}\left( wf^{p}\right) A^{1/q}\left( wg^{q}\right) .
\end{equation*}%
In case of $A\left( wg^{q}\right) =0,$ the proof is done similarly. This
completes the proof.
\end{proof}

\begin{remark}
The inequality (\ref{3T1-2}) shows that the inequality (\ref{3T1-1}) is
better than the inequality (\ref{2T1-1}).
\end{remark}

If we take $w=1$ on $E$ in the Theorem \ref{3T1}, then we can give the
following corollary:

\begin{corollary}
\label{3C1}\bigskip Let $L$ satisfy conditions $L1$, $L2$, and $A$ satisfy
conditions $A1$, $A2$ on a base set $E$. Let $p>1$ and $p^{-1}+q^{-1}=1.$ If 
$\alpha ,\beta ,f,g\geq 0$ on $E$ and $\alpha fg,\beta fg,\alpha
f^{p},\alpha g^{q},\beta f^{p},\beta g^{q},fg\in L$ then we have

i.)%
\begin{equation}
A\left( fg\right) \leq A^{1/p}\left( \alpha f^{p}\right) A^{1/q}\left(
\alpha g^{q}\right) +A^{1/p}\left( \beta f^{q}\right) A^{1/q}\left( \beta
g^{q}\right)  \label{3C1-1}
\end{equation}%
ii.)%
\begin{equation*}
A^{1/p}\left( \alpha f^{p}\right) A^{1/q}\left( \alpha g^{q}\right)
+A^{1/p}\left( \beta f^{p}\right) A^{1/q}\left( \beta g^{q}\right) \leq
A^{1/p}\left( f^{p}\right) A^{1/q}\left( g^{q}\right) .
\end{equation*}
\end{corollary}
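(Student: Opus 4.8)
The plan is to obtain this corollary as the special case $w=1$ of Theorem \ref{3T1}, so the entire argument is a specialization rather than a fresh proof. First I would check that the constant function $\mathbf{1}$ is an admissible weight: condition $L2$ guarantees that $1\in L$, and $1\geq 0$ on $E$, so the choice $w=1$ satisfies the standing hypotheses on $w$ in Theorem \ref{3T1}. This is the only structural point that needs to be recorded, and it is immediate.

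Next I would verify that the integrability and positivity hypotheses of Theorem \ref{3T1} reduce exactly to those assumed in the corollary. Setting $w=1$, the membership requirements $\alpha wfg,\beta wfg,\alpha wf^{p},\alpha wg^{q},\beta wf^{p},\beta wg^{q},wfg\in L$ become $\alpha fg,\beta fg,\alpha f^{p},\alpha g^{q},\beta f^{p},\beta g^{q},fg\in L$, which are precisely the conditions listed in the statement of Corollary \ref{3C1}. The positivity assumptions $\alpha,\beta,f,g\geq 0$ on $E$ are inherited verbatim. Hence all hypotheses of Theorem \ref{3T1} are met.

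Finally, I would substitute $w=1$ into the two conclusions of Theorem \ref{3T1}: inequality (\ref{3T1-1}) becomes (\ref{3C1-1}), and inequality (\ref{3T1-2}) becomes part (ii) of the corollary, since every occurrence of $w$ in the functionals $A(\alpha wf^{p})$, $A(\alpha wg^{q})$, and so on collapses to the corresponding unweighted functional. I do not anticipate any genuine obstacle, as the derivation is a pure specialization; the only item deserving a one-line justification is the appeal to $L2$ to ensure $w=1\in L$, which is what licenses the direct invocation of Theorem \ref{3T1}.
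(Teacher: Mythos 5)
Your proposal is correct and matches the paper exactly: the paper introduces Corollary \ref{3C1} with the phrase ``If we take $w=1$ on $E$ in the Theorem \ref{3T1}, then we can give the following corollary,'' so its entire proof is precisely the specialization $w=1$ that you carry out, with the appeal to $L2$ for $1\in L$ being the only point needing mention. No gap; the argument is the same.
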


\begin{remark}
i.) If we choose $E=\left[ a,b\right] \subseteq 
\mathbb{R}
$, $L=L\left[ a,b\right] $, $\alpha (t)=\frac{b-t}{b-a},\beta (t)=\frac{t-a}{%
b-a}$ on $E$ and $A(f)=\int_{a}^{b}\left\vert f(t)\right\vert dt$ in the
Corollary \ref{3C1}, then the inequality (\ref{3C1-1}) reduce the inequality
(\ref{1-5}).

ii.) If we choose $E=\left\{ 1,2,...,n\right\} ,$ $\alpha (k)=\frac{k}{n}%
,\beta (k)=\frac{n-k}{n}$ on $E$, $f:E\rightarrow \left[ 0,\infty \right)
,f(k)=a_{k},$ and $A(f)=\sum_{k=1}^{n}a_{k}$ in the Theorem\ref{3C1}, then
the inequality (\ref{3C1-1}) reduce the inequality (\ref{2-3}).
\end{remark}

We can give more general form of the Theorem \ref{3T1} as follows:

\begin{theorem}
\label{3T2}Let $L$ satisfy conditions $L1$, $L2$, and $A$ satisfy conditions 
$A1$, $A2$ on a base set $E$. Let $p>1$ and $p^{-1}+q^{-1}=1.$ If $\alpha
_{i},w,f,g\geq 0$ on $E,$ $\alpha _{i}wfg,\alpha _{i}wf^{p},\alpha
_{i}wg^{q},wfg\in L,i=1,2,...,m,$ and $\sum_{i=1}^{m}\alpha _{i}=1,$ then we
have

i.)%
\begin{equation*}
A\left( wfg\right) \leq \sum_{i=1}^{m}A^{1/p}\left( \alpha _{i}wf^{p}\right)
A^{1/q}\left( \alpha _{i}wg^{q}\right)
\end{equation*}

ii.)%
\begin{equation*}
\sum_{i=1}^{m}A^{1/p}\left( \alpha _{i}wf^{p}\right) A^{1/q}\left( \alpha
_{i}wg^{q}\right) \leq A^{1/p}\left( wf^{p}\right) A^{1/q}\left(
wg^{q}\right) .
\end{equation*}
\end{theorem}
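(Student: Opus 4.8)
The plan is to follow the proof of Theorem \ref{3T1} essentially line by line, with the partition hypothesis $\sum_{i=1}^{m}\alpha_{i}=1$ taking over the role that the relation $\alpha+\beta=1$ plays implicitly there. The whole argument rests on two facts already available: Hölder's inequality for isotonic functionals (inequality (\ref{2T1-1})), applied termwise with the nonnegative weight $\alpha_{i}w$ in place of $w$, and Young's inequality (\ref{0-1}), applied termwise with $t=1/p$.

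For part i.), I would first invoke linearity (A1) together with $\sum_{i=1}^{m}\alpha_{i}=1$ to split the functional:
\begin{equation*}
A(wfg)=A\Bigl(\sum_{i=1}^{m}\alpha_{i}wfg\Bigr)=\sum_{i=1}^{m}A(\alpha_{i}wfg).
\end{equation*}
Applying (\ref{2T1-1}) to each summand with weight $\alpha_{i}w$ gives $A(\alpha_{i}wfg)\leq A^{1/p}(\alpha_{i}wf^{p})A^{1/q}(\alpha_{i}wg^{q})$ (the memberships $\alpha_{i}wfg,\alpha_{i}wf^{p},\alpha_{i}wg^{q}\in L$ are exactly the hypotheses that license this), and summing over $i$ yields i.).

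For part ii.), I would first handle the nondegenerate case $A^{1/p}(wf^{p})A^{1/q}(wg^{q})\neq 0$. Dividing the left-hand side by this quantity rewrites it as
\begin{equation*}
\sum_{i=1}^{m}\Bigl(\frac{A(\alpha_{i}wf^{p})}{A(wf^{p})}\Bigr)^{1/p}\Bigl(\frac{A(\alpha_{i}wg^{q})}{A(wg^{q})}\Bigr)^{1/q}.
\end{equation*}
To the $i$-th term I would apply Young's inequality (\ref{0-1}) with $t=1/p$, bounding it by $\tfrac{1}{p}\,\tfrac{A(\alpha_{i}wf^{p})}{A(wf^{p})}+\tfrac{1}{q}\,\tfrac{A(\alpha_{i}wg^{q})}{A(wg^{q})}$. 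Summing over $i$ and pulling the sums back inside $A$ by linearity, the identity $\sum_{i=1}^{m}A(\alpha_{i}wf^{p})=A(wf^{p})$ (once more a consequence of $\sum_{i}\alpha_{i}=1$), and likewise for $g$, collapses both bracketed sums to $1$, leaving $\tfrac{1}{p}+\tfrac{1}{q}=1$. Clearing the denominator gives ii.).

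The only step needing separate care is the degenerate case $A(wf^{p})=0$ or $A(wg^{q})=0$, where the division above is illegal. Assuming $A(wf^{p})=0$, linearity gives $\sum_{i=1}^{m}A(\alpha_{i}wf^{p})=0$; since each $\alpha_{i}wf^{p}\geq 0$, positivity (A2) forces $A(\alpha_{i}wf^{p})=0$ for every $i$, so each term on the left vanishes and both sides equal $0$. I do not anticipate any genuine obstacle: the result is a routine termwise generalization of Theorem \ref{3T1}, and the hypothesis $\sum_{i}\alpha_{i}=1$ is precisely what is needed both to recombine $wfg$ in i.) and to make the Young bound telescope to $1$ in ii.).
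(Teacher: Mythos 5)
Your proposal is correct and follows exactly the route the paper intends: its own proof of this theorem is just the remark that "the proof can be easily done similarly to the proof of Theorem \ref{3T1}," and your argument is precisely that proof carried out termwise with $\sum_{i=1}^{m}\alpha_{i}=1$ replacing $\alpha+\beta=1$, including the same Young-inequality step and the same treatment of the degenerate case $A(wf^{p})=0$ or $A(wg^{q})=0$.
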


\begin{proof}
The proof can be easily done similarly to the proof of Theorem \ref{3T1}.
\end{proof}

If we take $w=1$ on $E$ in the Theorem \ref{3T1}, then we can give the
following corollary:

\begin{corollary}
\label{3C2}\bigskip Let $L$ satisfy conditions $L1$, $L2$, and $A$ satisfy
conditions $A1$, $A2$ on a base set $E$. Let $p>1$ and $p^{-1}+q^{-1}=1.$ If 
$\alpha _{i},f,g\geq 0$ on $E,$ $\alpha _{i}fg,\alpha _{i}f^{p},\alpha
_{i}g^{q},fg\in L,i=1,2,...,m,$ and $\sum_{i=1}^{m}\alpha _{i}=1,$ then we
have

i.)%
\begin{equation}
A\left( fg\right) \leq \sum_{i=1}^{m}A^{1/p}\left( \alpha _{i}f^{p}\right)
A^{1/q}\left( \alpha _{i}g^{q}\right)  \label{3C2-1}
\end{equation}%
ii.)%
\begin{equation*}
\sum_{i=1}^{m}A^{1/p}\left( \alpha _{i}f^{p}\right) A^{1/q}\left( \alpha
_{i}g^{q}\right) \leq A^{1/p}\left( f^{p}\right) A^{1/q}\left( g^{q}\right) .
\end{equation*}
\end{corollary}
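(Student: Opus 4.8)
The plan is to obtain Corollary~\ref{3C2} as the special case $w=1$ of Theorem~\ref{3T2}; since that theorem is already established, the statement follows by direct substitution. For completeness, however, I would run the argument from scratch, mirroring the proof of Theorem~\ref{3T1} but carrying $m$ summands in place of two and applying H\"older's inequality for isotonic functionals (Theorem~\ref{2T1}) with the weight $w$ replaced by each $\alpha_i$.

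For part (i), the key observation is that the normalization $\sum_{i=1}^m \alpha_i = 1$ lets me write $fg = \sum_{i=1}^m \alpha_i fg$, so that linearity of $A$ (property $A1$) gives $A(fg) = \sum_{i=1}^m A(\alpha_i fg)$. Applying Theorem~\ref{2T1} to each summand with weight $\alpha_i$ yields $A(\alpha_i fg) \le A^{1/p}(\alpha_i f^p)\,A^{1/q}(\alpha_i g^q)$, and summing over $i$ produces the claimed bound.

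For part (ii), I would first treat the nondegenerate case $A^{1/p}(f^p)\,A^{1/q}(g^q) \neq 0$ by dividing through and recognizing each summand as $\bigl(A(\alpha_i f^p)/A(f^p)\bigr)^{1/p}\bigl(A(\alpha_i g^q)/A(g^q)\bigr)^{1/q}$. Young's inequality (\ref{0-1}) with $t = 1/p$ bounds this by $\tfrac{1}{p}\,A(\alpha_i f^p)/A(f^p) + \tfrac{1}{q}\,A(\alpha_i g^q)/A(g^q)$; summing over $i$ and invoking linearity together with $\sum_i \alpha_i = 1$ collapses the numerators, since $\sum_i A(\alpha_i f^p) = A(f^p)$ and $\sum_i A(\alpha_i g^q) = A(g^q)$, so the total is $\tfrac{1}{p} + \tfrac{1}{q} = 1$.

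The main obstacle---really the only point requiring care---is the degenerate case $A^{1/p}(f^p)\,A^{1/q}(g^q) = 0$, where the division above is illegal. Here I would argue as in Theorem~\ref{3T1}: if, say, $A(f^p) = 0$, then $A(f^p) = \sum_i A(\alpha_i f^p)$ with every term nonnegative (by $A2$ and $\alpha_i, f \ge 0$), forcing each $A(\alpha_i f^p) = 0$ and hence the entire left-hand side to vanish, so that $0 \le 0$; the case $A(g^q) = 0$ is symmetric. Everything else is routine bookkeeping with properties $A1$ and $A2$.
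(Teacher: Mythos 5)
Your proposal is correct and follows essentially the same route as the paper: the paper obtains Corollary~\ref{3C2} precisely by setting $w=1$ in Theorem~\ref{3T2}, whose own proof is stated to be the $m$-term adaptation of the proof of Theorem~\ref{3T1} that you spell out (decomposition via $\sum_{i=1}^{m}\alpha_i=1$ and linearity for part (i), Young's inequality plus the same degenerate-case analysis for part (ii)). Your write-up in fact fills in details the paper leaves implicit, and correctly cites Theorem~\ref{3T2} where the paper's text has a slip referring back to Theorem~\ref{3T1}.
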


\begin{corollary}[Improvement of H\"{o}lder inequality for double integrals]

Let $p,q>1$ and $1/p+1/q=1$. If $f\ $and $g$ are real functions defined on $%
E=\left[ a,b\right] \times \left[ c,d\right] $ and if $\left\vert
f\right\vert ^{p},\left\vert g\right\vert ^{q}\in L(E)$ then%
\begin{equation}
\int_{a}^{b}\int_{c}^{d}\left\vert f(x,y)\right\vert \left\vert
g(x,y)\right\vert dxdy\leq \sum_{i=1}^{4}\left(
\int_{a}^{b}\int_{c}^{d}\alpha _{i}(x,y)\left\vert f(x,y)\right\vert
^{p}dx\right) ^{1/p}\left( \int_{a}^{b}\int_{c}^{d}\alpha
_{i}(x,y)\left\vert g(x,y)\right\vert ^{q}dx\right) ^{1/q},  \label{3C3-1}
\end{equation}%
where $\alpha _{1}(x,y)=\frac{\left( b-x\right) \left( d-y\right) }{\left(
b-a\right) \left( d-c\right) },\alpha _{2}(x,y)=\frac{\left( b-x\right)
\left( y-c\right) }{\left( b-a\right) \left( d-c\right) },\alpha _{3}(x,y)=%
\frac{\left( x-a\right) \left( y-c\right) }{\left( b-a\right) \left(
d-c\right) },,\alpha _{4}(x,y)=\frac{\left( x-a\right) \left( d-y\right) }{%
\left( b-a\right) \left( d-c\right) }$ on $E$
\end{corollary}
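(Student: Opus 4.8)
The plan is to recognize this corollary as the special case $m=4$ of Corollary \ref{3C2}, applied to the double-integral functional of Example (ii). Concretely, I would set $E=\left[ a,b\right] \times \left[ c,d\right]$, take $L=L(E)$ and the isotonic linear functional $A(h)=\int_{a}^{b}\int_{c}^{d}h(x,y)\,dx\,dy$, and invoke Corollary \ref{3C2} with $\left\vert f\right\vert$ and $\left\vert g\right\vert$ in place of $f$ and $g$ (these are nonnegative, as required) and with the four weights $\alpha_{1},\dots,\alpha_{4}$ given in the statement. Since $A\left( \left\vert f\right\vert \left\vert g\right\vert \right) =\int_{a}^{b}\int_{c}^{d}\left\vert f\right\vert \left\vert g\right\vert$, $A\left( \alpha_{i}\left\vert f\right\vert^{p}\right) =\int_{a}^{b}\int_{c}^{d}\alpha_{i}\left\vert f\right\vert^{p}$ and $A\left( \alpha_{i}\left\vert g\right\vert^{q}\right) =\int_{a}^{b}\int_{c}^{d}\alpha_{i}\left\vert g\right\vert^{q}$, the conclusion of Corollary \ref{3C2}(i) is exactly the asserted inequality (\ref{3C3-1}).

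It then remains only to check that the chosen weights satisfy the two hypotheses of Corollary \ref{3C2}: nonnegativity and $\sum_{i=1}^{4}\alpha_{i}=1$. Nonnegativity is immediate, since for $(x,y)\in E$ each of the factors $b-x,\ x-a,\ d-y,\ y-c$ is nonnegative while the common denominator $(b-a)(d-c)$ is positive. The summation identity is the only computation of substance: grouping the four numerators gives
\[
(b-x)\big[(d-y)+(y-c)\big]+(x-a)\big[(y-c)+(d-y)\big]=(b-x)(d-c)+(x-a)(d-c)=(b-a)(d-c),
\]
so that $\sum_{i=1}^{4}\alpha_{i}=1$ on $E$.

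Finally I would dispatch the integrability requirements $\alpha_{i}\left\vert f\right\vert^{p},\ \alpha_{i}\left\vert g\right\vert^{q},\ \alpha_{i}\left\vert f\right\vert \left\vert g\right\vert,\ \left\vert f\right\vert \left\vert g\right\vert \in L(E)$. Each $\alpha_{i}$ is a bounded (indeed $0\le \alpha_{i}\le 1$) continuous function, so multiplying the integrable functions $\left\vert f\right\vert^{p}$ and $\left\vert g\right\vert^{q}$ by $\alpha_{i}$ preserves integrability; and $\left\vert f\right\vert \left\vert g\right\vert \in L(E)$ follows from the classical H\"{o}lder inequality (\ref{1-1}) in its double-integral form, since $\left\vert f\right\vert^{p},\left\vert g\right\vert^{q}\in L(E)$. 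With the hypotheses verified, Corollary \ref{3C2} applies verbatim. I expect no genuine obstacle: the entire content is the partition-of-unity identity $\sum_{i=1}^{4}\alpha_{i}=1$, which is precisely the product of the two one-dimensional partitions $\tfrac{b-x}{b-a}+\tfrac{x-a}{b-a}=1$ and $\tfrac{d-y}{d-c}+\tfrac{y-c}{d-c}=1$ used in the earlier remarks, expanded over the four corner terms in $x$ and $y$.
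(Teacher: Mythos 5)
Your proposal is correct and is essentially the paper's own proof: the paper likewise specializes the functional result to $E=[a,b]\times[c,d]$, $A(h)=\int_a^b\int_c^d\left\vert h(x,y)\right\vert \,dx\,dy$ and the same four tensor-product weights (the paper cites Corollary \ref{3C1}, evidently a slip for the $m$-weight Corollary \ref{3C2} with $m=4$, which is what you correctly invoke). Your explicit checks of nonnegativity, the partition-of-unity identity $\sum_{i=1}^{4}\alpha_{i}=1$, and the integrability hypotheses are details the paper leaves implicit.
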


\begin{proof}
If we choose $E=\left[ a,b\right] \times \left[ c,d\right] \subseteq 
\mathbb{R}
^{2}$, $L=L(E)$, $\alpha _{1}(x,y)=\frac{\left( b-x\right) \left( d-y\right) 
}{\left( b-a\right) \left( d-c\right) },\alpha _{2}(x,y)=\frac{\left(
b-x\right) \left( y-c\right) }{\left( b-a\right) \left( d-c\right) },\alpha
_{3}(x,y)=\frac{\left( x-a\right) \left( y-c\right) }{\left( b-a\right)
\left( d-c\right) },\alpha _{4}(x,y)=\frac{\left( x-a\right) \left(
d-y\right) }{\left( b-a\right) \left( d-c\right) }$ on $E$ and $%
A(f)=\int_{a}^{b}\int_{c}^{d}\left\vert f(x,y)\right\vert dxdy$ in the
Corollary \ref{3C1}, then we get the inequality (\ref{3C3-1}).
\end{proof}

\begin{corollary}
Let $\left( a_{k,l}\right) $ and $\left( b_{k,l}\right) $ be two tuples of
positive numbers and $p,q>1$ such that $1/p+1/q=1.$ Then we have%
\begin{equation}
\sum_{k=1}^{n}\sum_{l=1}^{m}a_{k,l}b_{k,l}\leq \sum_{i=1}^{4}\left(
\sum_{k=1}^{n}\sum_{l=1}^{m}\alpha _{i}(k,l)a_{k,l}^{p}\right) ^{1/p}\left(
\sum_{k=1}^{n}\sum_{l=1}^{m}\alpha _{i}(k,l)b_{k,l}^{q}\right) ^{1/q},
\label{3C4-1}
\end{equation}%
where $\alpha _{1}(k,l)=\frac{kl}{nm},\alpha _{2}(k,l)=\frac{\left(
n-k\right) l}{nm},\alpha _{3}(k,l)=\frac{\left( n-k\right) \left( m-l\right) 
}{nm},\alpha _{4}(k,l)=\frac{k\left( m-l\right) }{nm}$ on $E.$
\end{corollary}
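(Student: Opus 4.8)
The plan is to recognize this as a direct specialization of Corollary \ref{3C2} to the discrete product setting, exactly mirroring the proof just given for the double-integral corollary \eqref{3C3-1}. The target inequality \eqref{3C4-1} has precisely the shape of \eqref{3C2-1} with $m$ (the number of weights) equal to $4$, so the whole task reduces to choosing the correct isotonic linear functional and the four weight functions, and then checking that the hypotheses of Corollary \ref{3C2} are satisfied.

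Concretely, I would take $E=\left\{ 1,2,\dots ,n\right\} \times \left\{ 1,2,\dots ,m\right\}$ with $L$ the space of all real functions on $E$, and define the functional
\begin{equation*}
A(h)=\sum_{k=1}^{n}\sum_{l=1}^{m}h(k,l),
\end{equation*}
which is an isotonic linear functional by Example iv. Setting $f(k,l)=a_{k,l}$ and $g(k,l)=b_{k,l}$ (both nonnegative) makes the left-hand side $A(fg)=\sum_{k,l}a_{k,l}b_{k,l}$, while the term $A^{1/p}\left(\alpha_{i}f^{p}\right)A^{1/q}\left(\alpha_{i}g^{q}\right)$ becomes exactly the $i$-th summand on the right of \eqref{3C4-1}. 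Because $E$ is finite, every integrability condition $\alpha_{i}fg,\alpha_{i}f^{p},\alpha_{i}g^{q},fg\in L$ is automatic, so no regularity issue arises.

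The one substantive verification is the partition-of-unity requirement $\sum_{i=1}^{4}\alpha_{i}=1$ of Corollary \ref{3C2}. This is a short algebraic check: summing the four numerators over a common denominator $nm$ gives
\begin{equation*}
kl+(n-k)l+(n-k)(m-l)+k(m-l)=l\,\bigl[k+(n-k)\bigr]+(m-l)\,\bigl[(n-k)+k\bigr]=nl+n(m-l)=nm,
\end{equation*}
so that $\sum_{i=1}^{4}\alpha_{i}(k,l)=1$ for every $(k,l)\in E$. With this identity confirmed, all hypotheses of Corollary \ref{3C2} hold.

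Having arranged these identifications, I would simply invoke part i.) of Corollary \ref{3C2}, whose conclusion \eqref{3C2-1} is verbatim the desired inequality \eqref{3C4-1} after substituting the chosen $A$, $f$, $g$, and $\alpha_{i}$. I do not anticipate any genuine obstacle here; the only place demanding care is the partition-of-unity computation above, and even that is routine. The proof is therefore a one-line appeal to Corollary \ref{3C2} once the discrete weights $\alpha_{i}$ are shown to sum to one.
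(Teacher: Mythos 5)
Your proof is correct and takes essentially the same approach as the paper: the paper likewise specializes the general functional result to $E=\{1,\dots,n\}\times\{1,\dots,m\}$ with $A(f)=\sum_{k=1}^{n}\sum_{l=1}^{m}f(k,l)$, $f(k,l)=a_{k,l}$, $g(k,l)=b_{k,l}$ and the same four weights. Your explicit check that $\sum_{i=1}^{4}\alpha_{i}=1$ and your appeal to Corollary \ref{3C2} (the paper's proof cites Corollary \ref{3C1}, which handles only two weights --- evidently a slip for \ref{3C2}) are minor tightenings of the paper's one-line argument.
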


\begin{proof}
If we choose $E=\left\{ 1,2,...,n\right\} \times \left\{ 1,2,...,m\right\} ,$
$\alpha _{1}(k,l)=\frac{kl}{nm},\alpha _{2}(k,l)=\frac{\left( n-k\right) l}{%
nm},\alpha _{3}(k,l)=\frac{\left( n-k\right) \left( m-l\right) }{nm},\alpha
_{4}(k,l)=\frac{k\left( m-l\right) }{nm}$ on $E$, $f:E\rightarrow \left[
0,\infty \right) ,f(k,l)=a_{k,l},$ and $A(f)=\sum_{k=1}^{n}%
\sum_{l=1}^{m}a_{k,l}$ in the Theorem\ref{3C1}, then we get the inequality (%
\ref{3C4-1}).
\end{proof}

\section{\protect\bigskip An Application for Double Integrals}

In \cite{ZSOD12}, Sar\i kaya et al. gave the following lemma for obtain main
results.

\begin{lemma}
\label{L-3.1}Let $f:\Delta \subseteq 
\mathbb{R}
^{2}\rightarrow 
\mathbb{R}
$ be a partial differentiable mapping on $\Delta =\left[ a,b\right] \times %
\left[ c,d\right] $ in $%
\mathbb{R}
^{2}$with $a<b$ and $c<d.$ If $\frac{\partial ^{2}f}{\partial t\partial s}%
\in L(\Delta )$, then the following equality holds:%
\begin{eqnarray*}
&&\frac{f(a,c)+f(a,d)+f(b,c)+f(b,d)}{4}-\frac{1}{\left( b-a\right) \left(
d-c\right) }\int_{a}^{b}\int_{c}^{d}f(x,y)dxdy \\
&&-\frac{1}{2}\left[ \frac{1}{b-a}\int_{a}^{b}\left[ f(x,c)+f(x,d)\right] dx+%
\frac{1}{d-c}\int_{c}^{d}\left[ f(a,y)+f(b,y)\right] dy\right] \\
&=&\frac{\left( b-a\right) \left( d-c\right) }{4}\int_{0}^{1}%
\int_{0}^{1}(1-2t)(1-2s)\frac{\partial ^{2}f}{\partial t\partial s}\left(
ta+(1-t)b,sc+(1-s)d\right) dtds.
\end{eqnarray*}
\end{lemma}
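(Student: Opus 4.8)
The plan is to reduce the two-dimensional identity to the classical one-variable midpoint/trapezoid identity and to apply it once in each slot. The single-variable building block is
\[
\frac{h(a)+h(b)}{2}-\frac{1}{b-a}\int_a^b h(x)\,dx=\frac{b-a}{2}\int_0^1(1-2t)h'\bigl(ta+(1-t)b\bigr)\,dt,
\]
which I would first establish for an arbitrary absolutely continuous $h$ by a single integration by parts (taking $u=1-2t$ and $dv=h'(ta+(1-t)b)\,dt$, so that $v=-\frac{1}{b-a}h(ta+(1-t)b)$) together with the substitution $x=ta+(1-t)b$. Throughout, $\frac{\partial^2 f}{\partial t\partial s}(ta+(1-t)b,sc+(1-s)d)$ is read as the mixed second partial of $f$ in its two arguments evaluated at the transformed point, so that the chain rule for $x=ta+(1-t)b$ and $y=sc+(1-s)d$ is exactly what produces the prefactor $\frac{(b-a)(d-c)}{4}=\frac{b-a}{2}\cdot\frac{d-c}{2}$.

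Next I would compute the right-hand double integral by iterated integration by parts, innermost variable first. Fixing $s$ and integrating in $t$ against the weight $(1-2t)$, I use that an antiderivative in $t$ of the mixed partial is $-\frac{1}{b-a}$ times the first partial of $f$ in its second argument (the factor $-\frac{1}{b-a}$ being the chain-rule derivative of $x=ta+(1-t)b$). The boundary terms at $t=1$ and $t=0$ supply the values at $x=a$ and $x=b$, while the leftover integral $\int_0^1(\,\cdot\,)\,dt$ is converted by the substitution $x=ta+(1-t)b$ into $\frac{1}{b-a}\int_a^b(\,\cdot\,)\,dx$. Repeating the same integration by parts in $s$ against $(1-2s)$ on each resulting piece replaces every remaining first-order $s$-object by its boundary values at $y=c,d$ together with the mean $\frac{1}{d-c}\int_c^d(\,\cdot\,)\,dy$. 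Fubini's theorem, justified by the hypothesis $\frac{\partial^2 f}{\partial t\partial s}\in L(\Delta)$, is invoked to interchange the $x$- and $s$-integrations in the leftover term.

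Finally I would collect the terms according to their depth: the contributions where both integrations by parts hit the boundary give the four vertex values $f(a,c),f(a,d),f(b,c),f(b,d)$ with coefficient $\frac{1}{4}$; the contributions where exactly one integration by parts hits the boundary give the four single integrals $\int_a^b f(x,c)\,dx$, $\int_a^b f(x,d)\,dx$, $\int_c^d f(a,y)\,dy$, $\int_c^d f(b,y)\,dy$, bundled as in the statement; and the fully interior contribution yields the double integral $\frac{1}{(b-a)(d-c)}\int_a^b\int_c^d f\,dy\,dx$. Multiplying through by the prefactor $\frac{(b-a)(d-c)}{4}$ then reproduces the left-hand side. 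The only genuinely delicate point, and the place where I expect any error to creep in, is the sign and coefficient bookkeeping generated by the two chain-rule factors $-\frac{1}{b-a}$ and $-\frac{1}{d-c}$, the $(1-2t),(1-2s)$ weights, and the two substitutions; in particular the sign carried by the double-integral term, on which all three levels of boundary evaluation accumulate, must be tracked with the greatest care.
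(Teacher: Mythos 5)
First, a point of provenance: the paper you are being checked against does not actually prove this lemma at all; it quotes it verbatim from Sar\i kaya et al.\ \cite{ZSOD12} and uses it as a black box, so there is no internal proof to compare with. Your plan --- establish the one-variable identity
\begin{equation*}
\frac{h(a)+h(b)}{2}-\frac{1}{b-a}\int_a^b h(x)\,dx=\frac{b-a}{2}\int_0^1(1-2t)\,h'\bigl(ta+(1-t)b\bigr)\,dt
\end{equation*}
by one integration by parts (your antiderivative $v=-\frac{1}{b-a}h(ta+(1-t)b)$ is correct), then apply it once in each variable with Fubini to justify swapping the leftover integrations --- is the standard argument and is essentially how the original source proceeds. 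So the strategy is sound.

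The genuine problem sits exactly at the spot you flagged as delicate and then left unresolved: the sign of the doubly-interior term. Each application of the one-variable identity has the structure (boundary mean) $-$ (interior mean); composing two such operators gives the vertex terms with sign $+$, the two families of edge terms with sign $-$, and the interior--interior term with sign $(-)\times(-)=+$. Carrying your plan out honestly therefore yields
\begin{align*}
&\frac{(b-a)(d-c)}{4}\int_0^1\!\!\int_0^1(1-2t)(1-2s)\,\frac{\partial ^{2}f}{\partial t\partial s}\bigl(ta+(1-t)b,\,sc+(1-s)d\bigr)\,dt\,ds\\
&\qquad =\frac{f(a,c)+f(a,d)+f(b,c)+f(b,d)}{4}\;+\;\frac{1}{(b-a)(d-c)}\int_a^b\!\!\int_c^d f(x,y)\,dx\,dy\\
&\qquad \quad -\frac{1}{2}\left[ \frac{1}{b-a}\int_a^b\bigl[f(x,c)+f(x,d)\bigr]dx+\frac{1}{d-c}\int_c^d\bigl[f(a,y)+f(b,y)\bigr]dy\right],
\end{align*}
with a \emph{plus} sign in front of the double integral, whereas the statement you were asked to prove has a \emph{minus} sign there. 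The printed statement is in fact false: take $f(x,y)=xy$ on $[0,1]^{2}$. Then the right-hand side vanishes (since $\int_0^1(1-2t)\,dt=0$), the identity with $+$ gives $\frac14+\frac14-\frac12=0$, but the printed version gives $\frac14-\frac14-\frac12=-\frac12\neq 0$. Consequently your closing claim that the collected terms ``reproduce the left-hand side'' cannot be made good; no amount of care in the bookkeeping will manufacture the minus sign. What your argument actually proves is the corrected identity displayed above; the version reproduced in this paper carries a sign misprint on the double-integral term, and a complete answer should either prove the corrected statement or exhibit the counterexample showing the statement as written fails.
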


By using this equality and H\"{o}lder integral inequality for double
integrals, Sar\i kaya et al. obtained the following inequality:

\begin{theorem}
\label{T-3.1}Let $f:\Delta \subseteq 
\mathbb{R}
^{2}\rightarrow 
\mathbb{R}
$ be a partial differentiable mapping on $\Delta =\left[ a,b\right] \times %
\left[ c,d\right] $ in $%
\mathbb{R}
^{2}$with $a<b$ and $c<d.$ If $\left\vert \frac{\partial ^{2}f}{\partial
t\partial s}\right\vert ^{q},q>1,$\ is convex function on the co-ordinates
on $\Delta $, then one has the inequalities:%
\begin{eqnarray}
&&\left\vert \frac{f(a,c)+f(a,d)+f(b,c)+f(b,d)}{4}-\frac{1}{\left(
b-a\right) \left( d-c\right) }\int_{a}^{b}\int_{c}^{d}f(x,y)dxdy-A\right\vert
\label{3-0} \\
&\leq &\frac{\left( b-a\right) \left( d-c\right) }{4(p+1)^{2/p}}\left[ \frac{%
\left\vert f_{st}(a,c)\right\vert ^{q}+\left\vert f_{st}(a,d)\right\vert
^{q}+\left\vert f_{st}(b,c)\right\vert ^{q}+\left\vert
f_{st}(b,d)\right\vert ^{q}}{4}\right] ^{1/q},  \notag
\end{eqnarray}%
where 
\begin{equation*}
A=\frac{1}{2}\left[ \frac{1}{b-a}\int_{a}^{b}\left[ f(x,c)+f(x,d)\right] dx+%
\frac{1}{d-c}\int_{c}^{d}\left[ f(a,y)+f(b,y)\right] dy\right] ,
\end{equation*}%
$1/p+1/q=1$ and $f_{st}=\frac{\partial ^{2}f}{\partial t\partial s}.$
\end{theorem}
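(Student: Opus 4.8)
The plan is to build the proof directly on the integral identity of Lemma \ref{L-3.1}, which already expresses the signed quantity inside the modulus on the left-hand side of \eqref{3-0} as
\[
\frac{(b-a)(d-c)}{4}\int_{0}^{1}\int_{0}^{1}(1-2t)(1-2s)f_{st}\left( ta+(1-t)b,sc+(1-s)d\right) \,dtds.
\]
First I would take absolute values of both sides of this identity and move the modulus inside the double integral by the triangle inequality, thereby bounding the left-hand side of \eqref{3-0} by
\[
\frac{(b-a)(d-c)}{4}\int_{0}^{1}\int_{0}^{1}\left\vert 1-2t\right\vert \left\vert 1-2s\right\vert \left\vert f_{st}\left( ta+(1-t)b,sc+(1-s)d\right) \right\vert \,dtds.
\]

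Next I would apply the classical H\"{o}lder inequality for double integrals (the weightless case $w=1$ of Theorem \ref{2T1}) to the two factors $\left\vert 1-2t\right\vert \left\vert 1-2s\right\vert$ and $\left\vert f_{st}\right\vert$ with exponents $p$ and $q$, which splits the double integral as
\[
\left( \int_{0}^{1}\int_{0}^{1}\left\vert 1-2t\right\vert ^{p}\left\vert 1-2s\right\vert ^{p}\,dtds\right) ^{1/p}\left( \int_{0}^{1}\int_{0}^{1}\left\vert f_{st}\right\vert ^{q}\,dtds\right) ^{1/q}.
\]
The first factor factorizes into a product of one-dimensional integrals, and since $\left\vert 1-2t\right\vert$ is symmetric about $t=1/2$ one computes $\int_{0}^{1}\left\vert 1-2t\right\vert ^{p}\,dt=\frac{1}{p+1}$; hence this factor equals $(p+1)^{-2/p}$, which is exactly the constant appearing in \eqref{3-0}.

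For the second factor I would invoke the hypothesis that $\left\vert f_{st}\right\vert ^{q}$ is convex on the co-ordinates. Applying the one-variable convexity first in the $t$-slot and then in the $s$-slot to the convex combinations $ta+(1-t)b$ and $sc+(1-s)d$ yields the pointwise estimate
\[
\left\vert f_{st}\left( ta+(1-t)b,sc+(1-s)d\right) \right\vert ^{q}\leq ts\left\vert f_{st}(a,c)\right\vert ^{q}+t(1-s)\left\vert f_{st}(a,d)\right\vert ^{q}+(1-t)s\left\vert f_{st}(b,c)\right\vert ^{q}+(1-t)(1-s)\left\vert f_{st}(b,d)\right\vert ^{q}.
\]
Integrating this over $[0,1]\times[0,1]$ and using $\int_{0}^{1}\int_{0}^{1}ts\,dtds=\frac{1}{4}$ together with the analogous values $\frac14$ for the other three products collapses the bound to $\frac{1}{4}\left( \left\vert f_{st}(a,c)\right\vert ^{q}+\left\vert f_{st}(a,d)\right\vert ^{q}+\left\vert f_{st}(b,c)\right\vert ^{q}+\left\vert f_{st}(b,d)\right\vert ^{q}\right)$. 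Substituting both evaluated factors back into the H\"{o}lder product and multiplying by $\frac{(b-a)(d-c)}{4}$ reproduces \eqref{3-0} exactly.

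I expect the only genuinely delicate step to be the co-ordinate convexity estimate: one must iterate the defining one-variable inequality in each coordinate separately—using the convexity of $\left\vert f_{st}\right\vert^{q}(a,\cdot)$ and $\left\vert f_{st}\right\vert^{q}(b,\cdot)$ after the first reduction—rather than appealing to joint convexity, and then verify that the resulting bilinear convex combination of the four corner values indeed dominates the value at the interior point. The remaining ingredients, namely the triangle inequality, the single application of H\"{o}lder, and the elementary one-dimensional integrals, are routine bookkeeping.
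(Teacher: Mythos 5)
Your proposal is correct and is exactly the argument the paper attributes to Sar\i kaya et al.\ for Theorem \ref{T-3.1}: the paper presents this result as following from Lemma \ref{L-3.1} combined with the classical H\"{o}lder inequality for double integrals and co-ordinate convexity, which is precisely your chain of triangle inequality, H\"{o}lder with exponents $p,q$, the pointwise bound $ts\left\vert f_{st}(a,c)\right\vert ^{q}+t(1-s)\left\vert f_{st}(a,d)\right\vert ^{q}+(1-t)s\left\vert f_{st}(b,c)\right\vert ^{q}+(1-t)(1-s)\left\vert f_{st}(b,d)\right\vert ^{q}$, and the elementary evaluations $\int_{0}^{1}\left\vert 1-2t\right\vert ^{p}dt=\frac{1}{p+1}$ and $\int_{0}^{1}\int_{0}^{1}ts\,dtds=\frac{1}{4}$. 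Indeed, the paper's proof of its own improved inequality \eqref{3-1} follows this same skeleton with the refined H\"{o}lder inequality \eqref{3C3-1} substituted for the classical one, so your argument matches the intended approach step for step.
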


If Theorem \ref{T-3.1} are resulted again by using the inequality (\ref%
{3C3-1}), then we get the following result:

\begin{theorem}
Let $f:\Delta \subseteq 
\mathbb{R}
^{2}\rightarrow 
\mathbb{R}
$ be a partial differentiable mapping on $\Delta =\left[ a,b\right] \times %
\left[ c,d\right] $ in $%
\mathbb{R}
^{2}$with $a<b$ and $c<d.$ If $\left\vert \frac{\partial ^{2}f}{\partial
t\partial s}\right\vert ^{q},q>1,$\ is convex function on the co-ordinates
on $\Delta $, then one has the inequalities:%
\begin{eqnarray}
&&\left\vert \frac{f(a,c)+f(a,d)+f(b,c)+f(b,d)}{4}-\frac{1}{\left(
b-a\right) \left( d-c\right) }\int_{a}^{b}\int_{c}^{d}f(x,y)dxdy-A\right%
\vert   \label{3-1} \\
&\leq &\frac{\left( b-a\right) \left( d-c\right) }{4^{1+1/p}(p+1)^{2/p}}%
\left\{ \left[ \frac{4\left\vert f_{st}(a,c)\right\vert ^{q}+2\left\vert
f_{st}(a,d)\right\vert ^{q}+2\left\vert f_{st}(b,c)\right\vert
^{q}+\left\vert f_{st}(b,d)\right\vert ^{q}}{36}\right] ^{1/q}\right.  
\notag \\
&&+\left[ \frac{2\left\vert f_{st}(a,c)\right\vert ^{q}+\left\vert
f_{st}(a,d)\right\vert ^{q}+4\left\vert f_{st}(b,c)\right\vert
^{q}+2\left\vert f_{st}(b,d)\right\vert ^{q}}{36}\right] ^{1/q}  \notag \\
&&+\left[ \frac{2\left\vert f_{st}(a,c)\right\vert ^{q}+4\left\vert
f_{st}(a,d)\right\vert ^{q}+\left\vert f_{st}(b,c)\right\vert
^{q}+2\left\vert f_{st}(b,d)\right\vert ^{q}}{36}\right] ^{1/q}  \notag \\
&&\left. +\left[ \frac{\left\vert f_{st}(a,c)\right\vert ^{q}+2\left\vert
f_{st}(a,d)\right\vert ^{q}+2\left\vert f_{st}(b,c)\right\vert
^{q}+4\left\vert f_{st}(b,d)\right\vert ^{q}}{36}\right] ^{1/q}\right\} , 
\notag
\end{eqnarray}%
where 
\begin{equation*}
A=\frac{1}{2}\left[ \frac{1}{b-a}\int_{a}^{b}\left[ f(x,c)+f(x,d)\right] dx+%
\frac{1}{d-c}\int_{c}^{d}\left[ f(a,y)+f(b,y)\right] dy\right] ,
\end{equation*}%
$1/p+1/q=1$ and $f_{st}=\frac{\partial ^{2}f}{\partial t\partial s}.$
\end{theorem}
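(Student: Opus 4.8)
The plan is to start from the identity in Lemma \ref{L-3.1} and estimate its right-hand side by means of the improved H\"older inequality for double integrals (\ref{3C3-1}) on the unit square, in place of the classical H\"older inequality that was used to obtain Theorem \ref{T-3.1}. First I would take absolute values in Lemma \ref{L-3.1} and move them inside the double integral, giving
\[
\left|\,\cdots\,\right|\le\frac{(b-a)(d-c)}{4}\int_{0}^{1}\int_{0}^{1}|1-2t|\,|1-2s|\,\left|f_{st}\big(ta+(1-t)b,\,sc+(1-s)d\big)\right|\,dt\,ds .
\]
Then I would apply (\ref{3C3-1}) on $E=[0,1]\times[0,1]$ (so $a=c=0$, $b=d=1$) with $|f(t,s)|=|1-2t|\,|1-2s|$ and $|g(t,s)|=|f_{st}(ta+(1-t)b,sc+(1-s)d)|$, using the four weights $\alpha_{1}(t,s)=(1-t)(1-s)$, $\alpha_{2}(t,s)=(1-t)s$, $\alpha_{3}(t,s)=ts$, $\alpha_{4}(t,s)=t(1-s)$.

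Next comes the evaluation of the two families of integrals produced by (\ref{3C3-1}). For the first factor, each $\alpha_{i}$ factors as a product $u(t)v(s)$ with $u,v\in\{t,\,1-t\}$, and since $|1-2t|^{p}$ is symmetric about $t=1/2$ a change of variable $u=1-2t$ gives $\int_{0}^{1}t|1-2t|^{p}\,dt=\int_{0}^{1}(1-t)|1-2t|^{p}\,dt=\frac{1}{2(p+1)}$. Consequently, for every $i$,
\[
\int_{0}^{1}\int_{0}^{1}\alpha_{i}(t,s)\,|1-2t|^{p}|1-2s|^{p}\,dt\,ds=\frac{1}{4(p+1)^{2}},
\]
so that each first factor equals the common value $4^{-1/p}(p+1)^{-2/p}$, which is exactly what will produce the denominator $4^{1+1/p}(p+1)^{2/p}$ after combining with the prefactor $\tfrac{(b-a)(d-c)}{4}$ of Lemma \ref{L-3.1}.

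For the second factor I would invoke the coordinate-convexity of $|f_{st}|^{q}$, applying the convexity estimate in each variable successively to obtain
\[
\left|f_{st}(ta+(1-t)b,sc+(1-s)d)\right|^{q}\le ts\,|f_{st}(a,c)|^{q}+t(1-s)|f_{st}(a,d)|^{q}+(1-t)s\,|f_{st}(b,c)|^{q}+(1-t)(1-s)|f_{st}(b,d)|^{q}.
\]
Multiplying this bound by $\alpha_{i}(t,s)$ and integrating reduces everything to the elementary moments $\int_{0}^{1}t^{2}\,dt=\int_{0}^{1}(1-t)^{2}\,dt=\tfrac{1}{3}$ and $\int_{0}^{1}t(1-t)\,dt=\tfrac{1}{6}$; assembling the appropriate products of these yields, for $i=3,2,4,1$ respectively, the four bracketed expressions with denominator $36$ appearing in (\ref{3-1}). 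Collecting the prefactor $\tfrac{(b-a)(d-c)}{4}$ with the common first factor and summing the four contributions then gives (\ref{3-1}). The only genuine inequality steps are the application of (\ref{3C3-1}) and the coordinate-convexity estimate; the main obstacle is purely bookkeeping---pairing each weight $\alpha_{i}$ with the correct corner-weight pattern so that the four brackets emerge with the exact coefficient vectors $(4,2,2,1)$, $(2,1,4,2)$, $(2,4,1,2)$, $(1,2,2,4)$ listed in the statement, each summing to $9$ over the denominator $36$.
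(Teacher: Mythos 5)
Your proposal is correct and follows essentially the same route as the paper's own proof: take absolute values in Lemma \ref{L-3.1}, apply the improved H\"{o}lder inequality (\ref{3C3-1}) on $[0,1]\times[0,1]$ with the four product weights $ts,\,t(1-s),\,(1-t)s,\,(1-t)(1-s)$, use coordinate convexity of $\left\vert f_{st}\right\vert ^{q}$, and evaluate the moment integrals $\int_{0}^{1}\int_{0}^{1}\alpha _{i}\left\vert 1-2t\right\vert ^{p}\left\vert 1-2s\right\vert ^{p}dtds=\frac{1}{4(p+1)^{2}}$. Your bookkeeping of which weight produces which coefficient vector $(4,2,2,1)$, $(2,1,4,2)$, $(2,4,1,2)$, $(1,2,2,4)$ is accurate and in fact more explicit than the paper's, which merely asserts the final combination (and contains typographical slips in its statement of the convexity bound that your version corrects).
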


\begin{proof}
Using Lemma \ref{L-3.1} and the inequality (\ref{3C3-1}), we find%
\begin{eqnarray}
&&\left\vert \frac{f(a,c)+f(a,d)+f(b,c)+f(b,d)}{4}-\frac{1}{\left(
b-a\right) \left( d-c\right) }\int_{a}^{b}\int_{c}^{d}f(x,y)dxdy-A\right%
\vert   \label{*} \\
&\leq &\frac{\left( b-a\right) \left( d-c\right) }{4}\int_{0}^{1}%
\int_{0}^{1}\left\vert 1-2t\right\vert \left\vert 1-2s\right\vert \left\vert
f_{st}\left( ta+(1-t)b,sc+(1-s)\right) \right\vert dtds  \notag
\end{eqnarray}%
\begin{eqnarray}
&\leq &\frac{\left( b-a\right) \left( d-c\right) }{4}\left\{ \left(
\int_{0}^{1}\int_{0}^{1}ts\left\vert 1-2t\right\vert ^{p}\left\vert
1-2s\right\vert ^{p}dtds\right) ^{1/p}\right.   \notag \\
&&\times \left( \int_{0}^{1}\int_{0}^{1}ts\left\vert f_{st}\left(
ta+(1-t)b,sc+(1-s)\right) \right\vert ^{q}dtds\right) ^{1/q}  \notag \\
&&+\left( \int_{0}^{1}\int_{0}^{1}t(1-s)\left\vert 1-2t\right\vert
^{p}\left\vert 1-2s\right\vert ^{p}dtds\right) ^{1/p}  \notag \\
&&\times \left( \int_{0}^{1}\int_{0}^{1}t(1-s)\left\vert f_{st}\left(
ta+(1-t)b,sc+(1-s)\right) \right\vert ^{q}dtds\right) ^{1/q}  \notag \\
&&+\left( \int_{0}^{1}\int_{0}^{1}(1-t)s\left\vert 1-2t\right\vert
^{p}\left\vert 1-2s\right\vert ^{p}dtds\right) ^{1/p}  \notag \\
&&\times \left( \int_{0}^{1}\int_{0}^{1}(1-t)s\left\vert f_{st}\left(
ta+(1-t)b,sc+(1-s)\right) \right\vert ^{q}dtds\right) ^{1/q}  \notag \\
&&+\left( \int_{0}^{1}\int_{0}^{1}(1-t)(1-s)\left\vert 1-2t\right\vert
^{p}\left\vert 1-2s\right\vert ^{p}dtds\right) ^{1/p}  \notag \\
&&\left. \times \left( \int_{0}^{1}\int_{0}^{1}(1-t)(1-s)\left\vert
f_{st}\left( ta+(1-t)b,sc+(1-s)\right) \right\vert ^{q}dtds\right)
^{1/q}\right\} .  \notag
\end{eqnarray}%
Since $\left\vert f_{st}\right\vert ^{q}$ is convex function on the
co-ordinates on $\Delta $, we have for all $t,s\in \left[ 0,1\right] $%
\begin{eqnarray}
&&\left\vert f_{st}\left( ta+(1-t)b,sc+(1-s)\right) \right\vert ^{q}
\label{*-2} \\
&\leq &ts\left\vert f_{st}\left( a,c\right) \right\vert
^{q}+t(1-s)\left\vert f_{st}\left( a,d\right) \right\vert
^{q}+(1-t)s\left\vert f_{st}\left( a,c\right) \right\vert
^{q}+(1-t)(1-s)\left\vert f_{st}\left( a,c\right) \right\vert ^{q}  \notag
\end{eqnarray}%
for all $t,s\in \left[ 0,1\right] .$ Further since%
\begin{eqnarray}
\int_{0}^{1}\int_{0}^{1}ts\left\vert 1-2t\right\vert ^{p}\left\vert
1-2s\right\vert ^{p}dtds &=&\int_{0}^{1}\int_{0}^{1}t(1-s)\left\vert
1-2t\right\vert ^{p}\left\vert 1-2s\right\vert ^{p}dtds  \notag \\
&=&\int_{0}^{1}\int_{0}^{1}(1-t)s\left\vert 1-2t\right\vert ^{p}\left\vert
1-2s\right\vert ^{p}dtds  \label{3-2} \\
&=&\int_{0}^{1}\int_{0}^{1}(1-t)(1-s)\left\vert 1-2t\right\vert
^{p}\left\vert 1-2s\right\vert ^{p}dtds  \notag \\
&=&\frac{1}{4\left( p+1\right) ^{2}},
\end{eqnarray}%
a combination of (\ref{*}) - (\ref{3-2}) immediately gives the required
inequality (\ref{3-1}).
\end{proof}

\begin{remark}
Since $\eta :\left[ 0,\infty \right) \rightarrow 
\mathbb{R}
,\eta (x)=x^{s},0<s\leq 1,$ is a concave function, for \ all $u,v\geq 0$ we
have%
\begin{equation*}
\eta \left( \frac{u+v}{2}\right) =\left( \frac{u+v}{2}\right) ^{s}\geq \frac{%
\eta (u)+\eta (v)}{2}=\frac{u^{s}+v^{s}}{2}.
\end{equation*}%
From here, we get%
\begin{eqnarray}
I &=&\left\{ \left[ \frac{4\left\vert f_{st}(a,c)\right\vert
^{q}+2\left\vert f_{st}(a,d)\right\vert ^{q}+2\left\vert
f_{st}(b,c)\right\vert ^{q}+\left\vert f_{st}(b,d)\right\vert ^{q}}{36}%
\right] ^{1/q}\right.   \notag \\
&&+\left[ \frac{2\left\vert f_{st}(a,c)\right\vert ^{q}+\left\vert
f_{st}(a,d)\right\vert ^{q}+4\left\vert f_{st}(b,c)\right\vert
^{q}+2\left\vert f_{st}(b,d)\right\vert ^{q}}{36}\right] ^{1/q}  \notag \\
&&+\left[ \frac{2\left\vert f_{st}(a,c)\right\vert ^{q}+4\left\vert
f_{st}(a,d)\right\vert ^{q}+\left\vert f_{st}(b,c)\right\vert
^{q}+2\left\vert f_{st}(b,d)\right\vert ^{q}}{36}\right] ^{1/q}  \notag \\
&&\left. +\left[ \frac{\left\vert f_{st}(a,c)\right\vert ^{q}+2\left\vert
f_{st}(a,d)\right\vert ^{q}+2\left\vert f_{st}(b,c)\right\vert
^{q}+4\left\vert f_{st}(b,d)\right\vert ^{q}}{36}\right] ^{1/q}\right\}  
\notag
\end{eqnarray}%
\begin{eqnarray}
&&\leq 2\left\{ \left[ \frac{6\left\vert f_{st}(a,c)\right\vert
^{q}+3\left\vert f_{st}(a,d)\right\vert ^{q}+6\left\vert
f_{st}(b,c)\right\vert ^{q}+3\left\vert f_{st}(b,d)\right\vert ^{q}}{72}%
\right] ^{1/q}\right.   \notag \\
&&\left. +\left[ \frac{3\left\vert f_{st}(a,c)\right\vert ^{q}+6\left\vert
f_{st}(a,d)\right\vert ^{q}+3\left\vert f_{st}(b,c)\right\vert
^{q}+6\left\vert f_{st}(b,d)\right\vert ^{q}}{72}\right] ^{1/q}\right\}  
\notag
\end{eqnarray}%
\begin{equation*}
\leq 4\left\{ \left[ \frac{\left\vert f_{st}(a,c)\right\vert ^{q}+\left\vert
f_{st}(a,d)\right\vert ^{q}+\left\vert f_{st}(b,c)\right\vert
^{q}+\left\vert f_{st}(b,d)\right\vert ^{q}}{16}\right] ^{1/q}\right. 
\end{equation*}%
Thus we obtain%
\begin{eqnarray*}
&&\frac{\left( b-a\right) \left( d-c\right) }{4^{1+1/p}(p+1)^{2/p}}I \\
&\leq &\frac{\left( b-a\right) \left( d-c\right) }{4^{1+1/p}(p+1)^{2/p}}%
4\left\{ \left[ \frac{\left\vert f_{st}(a,c)\right\vert ^{q}+\left\vert
f_{st}(a,d)\right\vert ^{q}+\left\vert f_{st}(b,c)\right\vert
^{q}+\left\vert f_{st}(b,d)\right\vert ^{q}}{16}\right] ^{1/q}\right\}  \\
&\leq &\frac{\left( b-a\right) \left( d-c\right) }{4(p+1)^{2/p}}\left\{ %
\left[ \frac{\left\vert f_{st}(a,c)\right\vert ^{q}+\left\vert
f_{st}(a,d)\right\vert ^{q}+\left\vert f_{st}(b,c)\right\vert
^{q}+\left\vert f_{st}(b,d)\right\vert ^{q}}{4}\right] ^{1/q}\right\} .
\end{eqnarray*}%
This show us that the inequality (\ref{3-1}) is better than the inequality (%
\ref{3-0}).
\end{remark}

\end{document}